\newtheorem{theorem}{\bf Theorem}[section]
\newtheorem{lemma}[theorem]{\bf Lemma}
\newtheorem{definition}[theorem]{\bf Definition}
\begin{document}
\baselineskip=0.28in

\vspace*{40mm}

\begin{center}
{\LARGE \bf  Resonance graphs of kinky   benzenoid systems are daisy cubes}
\bigskip \bigskip

{\large Petra \v Zigert Pleter\v sek }

\smallskip
{\em  Faculty of Natural Sciences and Mathematics, University of Maribor,  Slovenia} \\
{\em Faculty of Chemistry and Chemical Engineering, University of Maribor, Slovenia} \\

e-mail: {\tt petra.zigert@um.si}

\bigskip\medskip

\today

\end{center}

\vspace{3mm}\noindent

\begin{abstract}
In \cite{kl-mo} introduced daisy cubes  are  interesting isometric  subgraphs of $n$-cubes $Q_n$,  induced with intervals between the maximal elements of a poset $(V(Q_n),\leq)$ and the vertex $0^n \in V(Q_n)$.  
In this paper we show that the resonance graph, which reflects the interaction between Kekul\' e structures of aromatic hydrocarbon molecules, is a daisy cube, if the molecules considered can be modeled with the so called kinky benzenoid systems, i.e. catacondensed benzenoid systems without linear hexagons.
 \end{abstract}
\baselineskip=0.30in

\bigskip

\section{Introduction}

 Klav\v zar and Mollard just recently   introduced in \cite{kl-mo}  a cube like structures called daisy cubes. The daisy cube $ Q_n(X)$ is defined as the subgraph
of $n$-cube $Q_n$ induced by the intersection of the intervals $ I(x, 0^n)$ over all $ x \in X$.
Daisy cubes are partial cubes that include among others Fibonacci cubes, Lucas cubes, and
bipartite wheels.  The authors in \cite{kl-mo} also  introduced  a  bivariate  polynomial called distance cube polynomial and showed   that for a daisy cubes it is connected with the cube polynomial.
	
	Fibonacci and Lucas cubes are a Fibonacci strings based graphs   introduced  as models for 
interconnection networks (see \cite{hsu, hsu-pa,kl} for more details).  Both structures are surprisingly connected with the  chemical molecules made of hydrogen and carbon atoms. Benzenoid hydrocarbons are molecules composed of benzene hexagonal rings and can be viewed as subgraphs of a hexagonal lattice. If we embed them on the  surface of a cylinder
and  roll up a hexagonal lattice we obtain carbon nanotubes - interesting molecules discovered in 1991 \cite{ii}.  Benzenoid hydrocarbons and carbon nanotubes are aromatic molecules and the interaction between theirs  Kekul\'e structures can be modeled with the resonance graph. 
In \cite{kl-zi} the authors showed that the resonance graph of a class of benzenoid hydrocarbons called fibonaccenes is isomorphic to a Fibonacci cube. Later the similar result was shown for a class od carbon nanotubes, but this time it was proven that the largest connected component of the    resonance graph is isomorphic to the Lucas cube, see \cite{zi-be}. Since both classes of cubes, i.e. Fibonaci and Lucas cubes,  are  daisy cubes, we were interested in a problem, for which class of chemical graphs the resonance graph is a daisy cube.

In the next section all the  necessary  definitions are given, followed with the section on the main result of this paper. 
The main result of this paper says, that the resonance graph of a fibonaccene-like benzenoid hydrocarbons, called kinky benzenoid systems, is a daisy cube.

\section{Preliminaries}

Let us first describe the concept of a daisy cube. If  $G = (V (G),E(G))$ is a graph and
$X \subseteq  V (G)$, then $\langle X \rangle$ denotes the subgraph of $ G$ induced by $X$.
Further,  for a word  $u$  of length $n$ over $B=\{0, 1\}$, i.e., $u = (u_1, \ldots, u_n) \in  B^n$
we will briefly write $u$ as $u_1\ldots  u_n$. 
 The $n$-cube $Q_n$ has the vertex set $B^n$, vertices $u_1 \ldots u_n$ and $v_1 \ldots v_n$ being adjacent if $u_i \neq  v_i$ for exactly one $i \in [n]$, where $[n] = \{1, \ldots , n\}$. 
Let $\leq$ be a partial order on $B^n$ defined with $u_1 \ldots u_n\leq  v_1 \ldots v_n $ if $u_i \leq v_i$ holds for all
$i \in  [n]$. For $X \subseteq B^n$ we define the graph $Q_n(X)$ as the subgraph of $Q_n$ with
$Q_n(X) = \left\langle  \{u \in B^n ; u \leq x \,\text{for some} \, x \in X\} \right\rangle $
and say that $Q_n(X)$ is a {\em daisy cube} (generated by $X$).
The vertex sets of daisy cubes are  also  known as hereditary
or downwards closed sets, see \cite{ju}. 
For vertices $u$ and $v$ of a graph $G$ let $d_G(u,v)$ be the  {\em distance} between $u$ and $v$ in $G$.  Further,  the {\em interval} $ I_G(u, v)$ between $u$ and $v$ (in
G) is the set of vertices lying on shortest $u, v$-paths, so  $I_G(u, v) = \{w ; d_G(u, v) =
d_G(u,w)+d_G(w, v)\}$.

{\em Benzenoid systems} are  $2$-connected planar graphs such that every interior face is a hexagon. A { \em benzenoid graph} is the underlying graph of a benzenoid system.   Two hexagons of a benzenoid system  $G$ are said to be {\em adjacent}, if they share an edge. 
A hexagon of $G$  is {\em terminal} if it is adjacent to only one other  hexagon  in $G$.  If  every vertex of a   benzenoid system $G$  belongs to at most two hexagons,   then $G$
is said to be catacondensed; otherwise it is pericondensed.  Note that hexagon $h$ of a catacondensed benzenoid system   that is adjacent to two other
hexagons  contains two vertices of degree 
two. If these two vertices are adjacent, then hexagon $h$ is  called  a {\em  kink} or {\em angularly connected hexagon}, otherwise $h$ is {\em linearly connected hexagon}. A catacondensed benzenoid system with no linearly connected hexagons is called a {\em kinky benzenoid system}.

A {\em 1-factor} of a benzenoid system  $G$ is a
spanning subgraph of $G$ such that every vertex has degree one. The edge set of a 1-factor is called a {\em perfect matching} of $G$, which is a set of independent edges covering all vertices of $G$. In chemical literature, perfect matchings are known as Kekul\'e structures (see \cite{gucy-89} for more details).   A hexagon $h$ of a benzenoid system $G$ is {\em $M$-alternating} if the edges of $h$ appear
alternately in and off the perfect matching $M$. Set $S$ of disjoint hexagons of $G$ is a 
{\em resonant set} of $G$  if there exists a perfect matching $M$  such that  all hexagons in $S$ are $M$-alternating.

Next we will briefly describe the binary coding of perfect matchings of  catacondensed benzenoid system $G$ with $n$ hexagons introduced in \cite{kl-ve}.  First we order the hexagons of a catacondensed benzenoid system with the use of the  depth--first search algorithm (DFS) or by the  breadth--first search algorithm (BFS) (\cite{ko}) performed on the inner dual 
$T$ of $G$, starting at the arbitrary leaf of $T$.  
 Let $h$ and $h'$ be adjacent hexagons of $G$ and $M$ a perfect matching of $G$. Then the 
two edges of $M$  in  $h$ that have exactly one vertex in $h'$ are called the
{\em link} from $h$ to $h'$.  For example, on Figure \ref{slika2} we can see  the link from hexagon $h_3$ to hexagon  $h_2$.  Note that result from \cite{kl-zi-br} says that either both edges of a link belong to some perfect matching of a catacondensed benzenoid system or none.
Further, let ${\cal M}(G)$ be the set of all perfect matchings  of $G$ and we define  a (labeling) function 
$$\ell: {\cal M}(G)\rightarrow \{0,1\}^n$$ 
as follows. Let $M$ be an arbitrary perfect matching of $G$ and let 
$e$ be the edge of hexagon $h_1$ opposite to the common edge of hexagons $h_1$ 
and $h_2$ (see Figure \ref{slika2}). Then for $i=1$ we set 
$$(\ell (M))_1 = \left\{ \begin{array}{ll}
                      1; & e\in M, \\
	              0; & e\notin M\,,
                     \end{array}
             \right.$$
while for $i=2,3,\ldots,n$ we define 
$$(\ell (M))_i = \left\{ \begin{array}{ll}
  1; & M\ {\rm contains\ the\ link\ from}\ h_i\ {\rm to\: its\: \:  predecessor}\ h_{j}, \\
	              0; & {\rm otherwise}\,.
                     \end{array}
             \right.$$
On Figure \ref{slika2}  we can see a kinky benzenoid system with ordered hexagons, a perfect matching $M$ and the  label of $M$. 

\bigskip

\begin{figure}[htb]
	\centering
\includegraphics[scale=0.6]{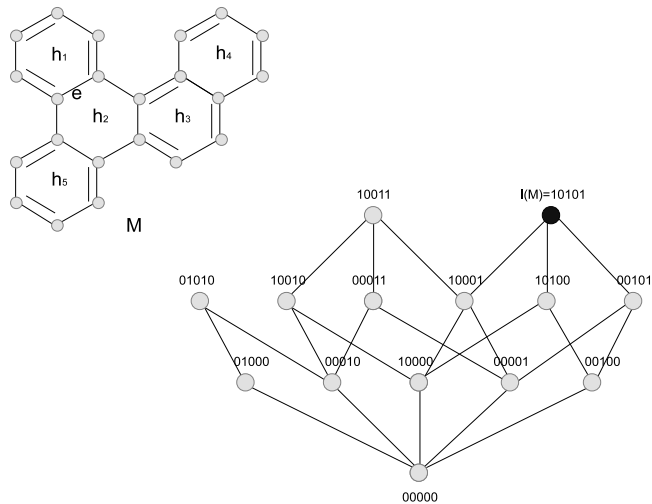}
\caption{A kinky benzenoid system with ordered hexagons, perfect matching $M$ and the  resonance graph with labeled vertices.}
	\label{slika2}
\end{figure}

Let ${\cal L}(G)$ be the set of all binary labels of perfect matchings of a catacondensed benzenoid system $G$. Clearly, ${\cal L}(G)$ is a subset of $B^n=\{0,1\}^n$ and since $(B^n,\leq)$ is a poset,  so  is $({\cal L}(G),\leq)$. 

The vertex set of the {\em resonance graph} $R(G)$ of $G$  consists of all perfect matchings of $G$, two vertices being adjacent whenever their symmetric difference forms the edge set of a hexagon of $G$ (see Figures \ref{slika2}, \ref{slika1}). The concept of the resonance graph was introduced independently in 
mathematics (under the name $Z$-transformation graph)  in \cite{zhgu-88} as well  in  chemistry \cite{elba-93,grun-82,rand-97,rakl-96}.
 We can also introduce the {\em resonance digraph} $\overrightarrow{R}(G)$ of $G$ by orienting the edge $M_1M_2$ of $R(G)$.  An ordered  pair $(M_1,M_2)$ is an arc from $M_1$ to $M_2$  in $\overrightarrow{R}(G)$ if for some $i$ we have 
 $(\ell (M_1))_i=0$ and $ (\ell (M_2))_i=1$,  and the labels of $M_1$ and $M_2$ on all  other positions  (different from $i$-th) coincide. Since the  resonance graph of  a catacondensed benzenoid system with $n$ hexagons  can be isometrically  embedded into the  $n$-cube (in fact it is also  a median graph, see \cite{kl-zi-br}) and  function $\ell$  is the labeling of the vertices of the resonance (di)graph for  that embedding,  it naturally follows that    the Hasse diagram of poset $({\cal L}(G),\leq)$  is isomorphic to the resonance digraph $\overrightarrow{R}(G)$.

\bigskip
\section{The main result}

In this section $G$ is a kinky benzenoid system with $n$ hexagons 
 $\{h_i,h_2,\ldots, h_n\}$ numbered accordingly to the DFS (or BFS) algorithm performed on the inner dual  $T$ of $G$ . Hexagons of $G$ are then numbered so that $h_i$ is a predecessor  of $h_j$ in $T$ if and only if $i < j$.

\begin{lemma}
Let  $G$ be a kinky benzenoid system and  $S$  a maximal resonant set of $G$.  If $h$ is a non-terminal hexagon of $G$ then   either hexagon  $h$ or  one of the hexagons    adjacent to $h$ must be in $S$, and if $h$ is a terminal hexagon of $G$ then either $h$ or the hexagon adjacent to $h$ must be in $S$.

\label{lema1}
\end{lemma}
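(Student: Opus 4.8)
The statement is really an assertion that a maximal resonant set $S$ is a \emph{dominating set} of the inner dual tree $T$: every hexagon either lies in $S$ or shares an edge with a hexagon of $S$. The plan is to argue by contradiction with the maximality of $S$. So I would suppose that $h$ is a hexagon with $h\notin S$ and with no hexagon adjacent to $h$ lying in $S$, and then show that $S\cup\{h\}$ is again a resonant set, contradicting the maximality of $S$. The terminal and non-terminal cases are treated uniformly, the only difference being that a terminal hexagon has a single neighbour.

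The first step is a reduction. For a set $S$ of pairwise disjoint hexagons write $V(S)$ for the union of their vertex sets. I would first establish the standard equivalence that $S$ is resonant if and only if $G-V(S)$ has a perfect matching: if $M$ witnesses that every hexagon of $S$ is $M$-alternating, then the edges of $M$ inside the hexagons of $S$ form a perfect matching of $V(S)$ and the remaining edges form a perfect matching of $G-V(S)$; conversely any perfect matching of $G-V(S)$ extends, by adding three alternating edges in each hexagon of $S$, to a perfect matching of $G$ under which all hexagons of $S$ are alternating. Next, since $G$ is a catacondensed benzenoid system, the maximum degree is three and so two hexagons sharing a vertex must share an edge, hence are adjacent. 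As neither $h$ nor any neighbour of $h$ lies in $S$, the hexagon $h$ is vertex-disjoint from every hexagon of $S$; thus $S\cup\{h\}$ is again a set of pairwise disjoint hexagons and $V(h)\subseteq V(H)$ for $H:=G-V(S)$. By the equivalence it then suffices to prove that $H-V(h)$ has a perfect matching, i.e. that $h$ is resonant in $H$.

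Let $M_0$ be a perfect matching of $H$, obtained by restricting a witnessing matching $M$ for $S$. The core of the argument is to show that $h$ is $M_0$-alternating, or else can be made so by switching $M_0$ along a single $M_0$-alternating cycle that stays inside $H$. Here the kink hypothesis enters decisively. Since $h$ is not linearly connected, its two degree-two vertices $u$ and $w$ are adjacent, and they retain their degree in $H$; hence in $M_0$ either the edge $uw$ is used, forcing the two further edges of $h$ at $u$ and $w$ to be unused, or $uw$ is unused, forcing those two edges to be used. In either case $M_0$ coincides, on the three edges of $h$ incident with $u$ or $w$, with exactly one of the two alternating patterns of the hexagon $h$. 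Consequently any discrepancy between $M_0$ and that pattern is confined to the three remaining edges of $h$, which lie on the side of the (at most two) hexagons adjacent to $h$; and those neighbours are not in $S$, so they survive in $H$. Performing the switch there turns $h$ into an alternating hexagon without touching $V(S)$, yielding a perfect matching of $H-V(h)$.

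The hard part, and the step I expect to require the most care, is the last one: controlling the $M_0$-alternating cycle used to switch $h$ so that it never enters a hexagon of $S$. This is precisely where being kinky rather than linear is essential, since the two adjacent degree-two vertices of a kink force the matching to be already nearly alternating in the immediate neighbourhood of $h$, confining the necessary switch to $h$ together with its neighbours and preventing the perturbation from propagating outward into $V(S)$; for a linear hexagon the two degree-two vertices are opposite and impose no such constraint. A clean way to make this localisation rigorous, which I would pursue if the direct cycle-chasing becomes unwieldy, is induction on the number of hexagons using the rooted structure of $T$ given by the DFS/BFS ordering, peeling off a terminal hexagon and reducing to a smaller kinky system.
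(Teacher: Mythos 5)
Your global strategy (contradiction with maximality: if $h$ and all of its neighbours avoid $S$, enlarge $S$) is sound, and your preliminary reductions are correct --- $S$ is resonant iff $G-V(S)$ has a perfect matching, and the catacondensed degree bound does force two hexagons sharing a vertex to share an edge, so $h$ is vertex-disjoint from $V(S)$. But the proof breaks exactly at the step you yourself flag as ``the hard part'', and that step is not carried out: the assertion that the discrepancy between $M_0$ and an alternating pattern of $h$ is ``confined to the three remaining edges of $h$'' and can be repaired ``without touching $V(S)$'' is precisely what needs proof. If, say, the two vertices of $h$ on the shared edge with a neighbour $h'$ are matched into $h'$, the repairing alternating cycle must pass through $h'$, and a priori it could propagate into the hexagons beyond $h'$, which may meet $V(S)$. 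The missing ingredient is the structural fact on which the paper's (much shorter) proof rests: if $h$ is not $M$-alternating, then by the link lemma quoted from \cite{kl-zi-br} some neighbour $h_i$ sends a \emph{link} to $h$, and because $h_i$ is not linearly connected this link forces $h_i$ itself to be $M$-alternating (its two adjacent degree-two vertices --- or, for a branching $h_i$, another application of the link lemma --- leave the remaining two vertices of $h_i$ no choice but the third alternating edge). This is exactly what localises your switch: the alternating cycle is the $6$-cycle of $h_i$ itself. Note also that your degree-two analysis of $u,w$ only covers a hexagon $h$ with exactly two neighbours; a non-terminal hexagon of a kinky system may have three neighbours and then no degree-two vertices at all, so that case is not ``treated uniformly'' by your argument as written.

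Once the key fact is in place, the paper's route is shorter than the one you sketch: instead of adding $h$ to $S$ (which requires performing and controlling switches at up to three neighbours simultaneously), the paper adds the $M$-alternating neighbour $h_i$. Since $h_i$ is $M$-alternating via the pattern containing the link to $h$, every vertex of $h_i$ is matched inside $h_i$, so no hexagon adjacent to $h_i$ can simultaneously be $M$-alternating; hence $h_i$ is disjoint from $S$, $S\cup\{h_i\}$ is resonant, and maximality forces $h_i\in S$ --- which is already the conclusion of the lemma. I would recommend replacing your switching argument (and the fallback induction, which is then unnecessary) by this direct use of the link lemma.
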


\begin{proof}
Let $G$ be a kinky benzenoid system and suppose $h$ is a non-terminal hexagon of $G$  where hexagons  $h_{1}, h_{2}$ (and if  exists - $h_{3}$) are adjacent to $h$. Further, let  $S$  be a maximal resonant set of $G$ and $M$ a perfect matching of $G$ such that every hexagon in $S$ is $M$-alternating.  If $h$ is $M$-alternating, we are done. Suppose $h$ is not in $S$ (so $h$ is not $M$-alternating).   Then there must be a link in $M$  from at least one of  hexagons $h_{1}, h_{2}$ (or $h_{3}$)  to hexagon $h$ and consequently this hexagon is $M$-alternating. By the maximality of $S$ it must also belong to $S$.

Similar  holds if $h$ is a  terminal hexagon and $h_{1}$ the hexagon adjacent to it - since $G$ is kinky benzenoid system either $h_{1}$ or $h$  must be a  $M$-alternating hexagon  contained in $S$.
\end{proof}

\bigskip
We can assign in a natural way a binary label to a resonant set of a benzenoid system.
\begin{definition}
Let $G$ be a benzenoid system with  $n$ hexagons $h_i$, $i\in [n]$.
 If $S$  is a resonant set of $G$, then its {\em binary representation } $b(S)$ is a binary string of length $n$ where
 $$(b (S))_j=\left\{ \begin{array}{rcc}  
                1 & ; & h_j \in S,\\
                0 & ; & otherwise\,.
				 \end{array}  \right.$$
\end{definition}


\begin{lemma}
Let $G$ be a kinky benzenoid system and $S$ a resonant set of $G$.  Then $S$  is a maximal resonant set of $G$ if and only if  $b(S)$ is a maximal element in $({\cal L}(G),\leq)$.
\label{lema2}
\end{lemma}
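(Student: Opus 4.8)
The plan is to translate the order-theoretic statement into the language of the resonance digraph and then to build, for each maximal resonant set, a distinguished perfect matching. Recall from the Preliminaries that the Hasse diagram of $({\cal L}(G),\leq)$ is isomorphic to $\overrightarrow{R}(G)$, with $\ell$ the vertex labelling; consequently a label is a maximal element of $({\cal L}(G),\leq)$ if and only if the corresponding matching is a \emph{sink} of $\overrightarrow{R}(G)$, i.e.\ admits no outgoing arc. Since rotating a single hexagon $h_i$ alters only the $i$-th coordinate of the label, and an arc is oriented from a $0$ to a $1$ in that coordinate, a matching $M$ is a sink precisely when every $M$-alternating hexagon $h_i$ already satisfies $(\ell(M))_i=1$. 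The whole lemma will then follow once I exhibit, for a maximal resonant set $S$, a matching $M_S$ with $\ell(M_S)=b(S)$ that is a sink.

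First I would construct $M_S$. As $S$ is resonant, some matching makes all hexagons of $S$ alternating, and since the hexagons of $S$ are pairwise disjoint I may independently flip each of them into the alternating state whose link to the predecessor is present (for $h_1$, the state with the distinguished edge $e$ in the matching); call the result $M_S$. By construction $(\ell(M_S))_i=1$ for every $h_i\in S$, so $b(S)\le\ell(M_S)$. The heart of the argument is to prove the reverse inequality together with the sink property, namely that every hexagon $h_i\notin S$ is non-$M_S$-alternating and has its predecessor-link absent, so that $(\ell(M_S))_i=0$. Here I invoke Lemma~\ref{lema1}: each such $h_i$ is dominated by $S$, hence adjacent to some $h_m\in S$ that is alternating in $M_S$. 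Analysing the two possibilities --- $h_m$ the predecessor of $h_i$, or $h_m$ a child of $h_i$ --- and using that $G$ is kinky (so that the two degree-two vertices of a non-terminal hexagon are adjacent), I would show the edges forced on $h_i$ by the alternating $h_m$ put $h_i$'s predecessor-edge into $M_S$ and destroy any alternation of $h_i$. This yields $\ell(M_S)=b(S)\in{\cal L}(G)$ and shows $M_S$ has no outgoing arc, so $b(S)$ is a maximal element.

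For the converse I argue by contraposition. If $S$ is \emph{not} a maximal resonant set, extend it to a maximal resonant set $\tilde S\supsetneq S$; by the forward direction $b(\tilde S)\in{\cal L}(G)$ is a maximal element, and because $\tilde S\supsetneq S$ we have $b(\tilde S)>b(S)$. Thus $b(S)$ lies strictly below an element of ${\cal L}(G)$ and cannot be a maximal element of $({\cal L}(G),\leq)$, which is exactly what is needed. The one genuinely technical point --- and the step I expect to be the main obstacle --- is the forward claim that at $M_S$ every hexagon outside $S$ is non-alternating with vanishing coordinate. This is precisely where the hypothesis of \emph{no linearly connected hexagon} is essential: for a linear hexagon the alternation of a neighbour in $S$ would propagate through the opposite edge and could install a link on a hexagon outside $S$, breaking both the identity $\ell(M_S)=b(S)$ and the sink property. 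The kink (angular) condition confines this propagation, and combined with the domination supplied by Lemma~\ref{lema1} it forces the clean correspondence $S\leftrightarrow b(S)$ between maximal resonant sets and maximal labels.
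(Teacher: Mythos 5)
Your proposal is correct, and in its key step it takes a genuinely different route from the paper. For the forward direction the paper constructs a matching $M$ with $\ell(M)=b(S)$ by directly ``inserting'' the appropriate links, and then proves maximality by contradiction: assuming some $M'$ with $\ell(M)\leq\ell(M')$ and $\ell(M)\neq\ell(M')$, it locates a coordinate $j$ with $(\ell(M'))_j=1$, $(\ell(M))_j=0$, and uses Lemma \ref{lema1} together with the kink condition to show that $\ell(M)$ and $\ell(M')$ are in fact incomparable --- a global comparison argument. You instead build the witness matching by rotating the (pairwise disjoint) hexagons of $S$ inside an arbitrary matching certifying resonance, and then establish maximality \emph{locally}, by showing $M_S$ is a sink of $\overrightarrow{R}(G)$ and invoking the stated isomorphism between the Hasse diagram of $({\cal L}(G),\leq)$ and the resonance digraph. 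Your construction has the advantage that the object produced is manifestly a perfect matching (the paper's link-insertion is left somewhat informal), and the sink criterion reduces maximality to a one-hexagon-at-a-time check; the cost is that you must carry out the case analysis ($S$-neighbour equal to the predecessor versus a successor of $h_j$) showing every hexagon outside $S$ is non-alternating with absent predecessor-link --- you correctly identify this as the crux and correctly locate where kinkiness enters, and the cases you name do close up (an alternating predecessor in $S$ with its own link present forces the shared vertices of $h_j$ to be matched inside the predecessor, while a successor in $S$ with its link present covers the shared vertices from outside $h_j$ and, because the two degree-two vertices of $h_j$ are adjacent, kills the edge needed for $h_j$'s own link). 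Your converse is the same contraposition-by-extension argument as the paper's.
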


\begin{proof}

First, let $S$ be a maximal resonant set of a kinky benzenoid system  $G$ and $b(S)$ its binary label.  Then we can construct a perfect matching $M$ of $G$ such that   $b(S)=\ell (M)$. 
Namely,  if $(b(S))_i=1$  for some $i \in [n]$, then we insert a link from hexagon $h_i$ to its   adjacent predecessor and $(\ell (M))_{i}=1$ (the same holds if $i=1$ -  then we insert a link from $h_1$ to $h_2$).

On the other side, if $(b(S))_j=0$  then due to Lemma \ref{lema1}
at least one of the hexagons adjacent to hexagon $h_j$ must be in $S$, let it be hexagon $h_{j'}$, so $(b(S))_{j'}=1$.  We again insert a link from hexagon $h_{j'}$ to its adjacent predecessor and  $(\ell (M))_{j'}=1$ (the same holds if $j'=1$ -  then we  insert a link from $h_{j'}=h_1$ to $h_j$). 
  If   $j<j'$, hexagon $h_j$   can not have a link to its   adjacent predecessor and  consequently $(\ell (M))_j=0$ . Also in the  case when  $j'< j$ there is  no link from hexagon $h_j$ to hexagon $h_{j'}$ and again $(\ell (M))_j=0$.

 Suppose  $\ell (M)$ is not a maximal element in 
$({\cal L}(G),\leq)$. Then there exists a perfect matching  $M'$ of $G$ such that
 $\ell (M)\leq \ell (M')$ and $\ell (M) \neq \ell (M')$.  More precisely, there exists $j\in [n]$ so that 
 $(\ell (M'))_j=1$ and $(\ell (M))_j=0$. Hexagon $h_j$ then  does not belong to $S$ and if $h_j$  is  a non terminal hexagon, it is adjacent  to  one   predecessor  $h_{j_0}$   and at most two  successors $h_{j_1}, h_{j_2}$. If $h_j$ is a terminal hexagon it has either only one  adjacent successor  $h_{j_1}$ or only one  adjacent predecessor $h_{j_0}$. 
 
 If $h_j$ is  a non terminal hexagon, then since $(\ell (M'))_j=1$ there is a link from hexagon $h_j$ to hexagon $h_{j_0}$ and since $G$ is kinky benzenoid system  there are no links from  $h_{j_1}, h_{j_2}$ to $h_j$ or from $h_{j_0}$ to its adjacent   predecessor. Consequently   $(\ell (M'))_{j_k}=0, k=0,1,2$.  Also if $h_j$ is  a terminal hexagon adjacent either to one  successor $h_{j_1}$ or one   predecessor $h_{j_0}$,  again $(\ell (M'))_{j_1}=0$ and  $(\ell (M'))_{j_0}=0$, respectively.
 
Let us now consider  label of $M$, $\ell (M)$, and supose $h_j$ is  a non terminal hexagon. 
   Since $h_j$ is not contained in  $S$, then by Lemma \ref{lema1} at least one of the  adjacent hexagons must be in $S$ and is therefore $M$-alternating. So, at least   one of the positions
 $(\ell (M))_{j_k}=1,\: k=0,1,2$ and consequently $\ell (M')$ and $\ell (M)$ are incomparable elements in $({\cal L}(G),\leq)$.
The same argument holds if  $h_j$ is  a terminal hexagon.

For the  if part of a proof suppose $S$ is not a maximal resonant set, so there exists a resonant set $S''$ of $G$ such that $S \subset S''$.  By the same argument as above,  there exist perfect matchings $M$ and $M''$ so that $b(S)=\ell (M)$ and   $b(S'')=\ell (M'')$. 
 But then $\ell (M) \leq \ell (M'')$ and  therefore $\ell (M)=b(S)$ is not a maximal element in 
 $({\cal L}(G),\leq)$.

\end{proof}

\begin{lemma}
Let $G$ be a kinky benzenoid system and  $\ell(M)$  a maximal element in \\
$({\cal L}(G), \leq)$. Then there exists a maximal resonant set $S$ of $G$ such that $b(S)=\ell(M)$.
\label{lema3}
\end{lemma}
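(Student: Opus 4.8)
The plan is to construct $S$ explicitly from the given maximal label and then verify it is a resonant set; maximality will only be invoked at the very end through Lemma~\ref{lema2}. Concretely, I would set
$$S=\{h_j\ ;\ (\ell(M))_j=1\},$$
so that $b(S)=\ell(M)$ holds by the very definition of $b$. With this choice the whole statement reduces to a single claim: that $S$ is a resonant set of $G$. Indeed, once $S$ is known to be resonant and $b(S)=\ell(M)$ is a maximal element of $({\cal L}(G),\leq)$, the ``if'' part of Lemma~\ref{lema2} forces $S$ to be a \emph{maximal} resonant set, which is exactly the conclusion sought.

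To prove that $S$ is resonant I would show that $M$ itself witnesses it, i.e.\ that every hexagon carrying label $1$ is $M$-alternating and that such hexagons are pairwise disjoint. For the first part, fix $h_j\in S$ with predecessor common edge $e=v_1v_2$. Then $(\ell(M))_j=1$ means precisely that the two link edges $v_2v_3$ and $v_1v_6$ lie in $M$, so the two vertices $v_1,v_2$ of $h_j$ adjacent to $e$ are matched inward. The key point is now the kinky hypothesis: any common edge of $h_j$ with a successor sits in a \emph{meta} position relative to $e$, hence shares one of its endpoints with a link edge just placed in $M$ (for instance a successor edge $v_3v_4$ has $v_3$ already matched to $v_2$). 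Consequently one edge of each successor link is forced out of $M$, and by the ``both or none'' property of links recorded in \cite{kl-zi-br} the other edge of that link is out of $M$ as well. This leaves the two remaining vertices $v_4,v_5$ of $h_j$ with no available partner except each other, so $v_4v_5\in M$ and $h_j$ is $M$-alternating with matching edges $v_2v_3,\,v_4v_5,\,v_1v_6$. The same bookkeeping covers terminal, angular and branching hexagons uniformly, the degree-two vertices of terminal and angular hexagons only making the conclusion easier.

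For disjointness I would argue by contradiction using what was just proved. Suppose two adjacent hexagons both lie in $S$; in the inner dual one of them, say $h_b$, is the successor of the other, $h_a$, across a shared edge $f=uv$. Since $h_b\in S$, its link to $h_a$ lies in $M$, so $u$ and $v$ are matched inward to $h_b$; in particular $f\notin M$ and the two edges of $h_a$ incident to $u$ and $v$ are not in $M$ either. Then $u$ has no edge of the hexagon $h_a$ in $M$, so $h_a$ cannot be $M$-alternating, contradicting the first part applied to $h_a\in S$. Hence no two hexagons of $S$ are adjacent, the elements of $S$ are pairwise disjoint, and $S$ is a resonant set witnessed by $M$. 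Finally, $b(S)=\ell(M)$ is maximal by assumption, so Lemma~\ref{lema2} yields that $S$ is a maximal resonant set.

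The main obstacle is the alternation claim of the second paragraph. It is exactly here that the kinky hypothesis is indispensable: for a linearly connected hexagon the successor edge would be opposite to $e$, neither of its endpoints would be consumed by the predecessor link, and the ``both or none'' property would then permit the successor link to lie in $M$, so $h_j$ could carry label $1$ without being $M$-alternating. Checking carefully that every successor edge of a kink or of a branching hexagon does share a vertex with the predecessor link — so that the forcing really propagates all the way around the hexagon — is the step I would write out in full detail.
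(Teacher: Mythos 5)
Your proof is correct and follows essentially the same route as the paper: take $S$ to be the set of hexagons carrying label $1$, verify that $M$ itself witnesses $S$ as a resonant set (each such hexagon is $M$-alternating and no two are adjacent), and deduce maximality. You in fact supply more detail than the paper at the key step — the paper merely asserts that every label-$1$ hexagon is $M$-alternating, whereas you derive it from the kinky hypothesis together with the both-or-none property of links, and you close with a clean appeal to Lemma~\ref{lema2} instead of the paper's contradiction via Lemma~\ref{lema1}; the only point to add is a one-line treatment of the base hexagon $h_1$, whose label is defined via the edge opposite the common edge with $h_2$ rather than via a link to a predecessor.
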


\begin{proof}

Let $G$ be a kinky benzenoid system with $n$ hexagons denoted with $h_i$, $i \in [n]$.
Let $\ell (M)$ be a maximal element in $({\cal L}(G),\leq)$, where $M$ is a perfect matching of a $G$. Then  for all  $i \in [n]$ such that  $(\ell (M))_i=1$, the hexagon $h_i$ is $M$-alternating  hexagon of $G$. Let $S$ be the set of all $M$-alternating  hexagons of $G$. 
If $(\ell (M))_j=(\ell (M))_k=1$ for some $j\neq k$ then hexagons $h_j$ and $h_k$ can not be adjacent, since there are links  in $M$ from  hexagons $h_j$ and $h_k$ to theirs adjacent  predecessors. Therefore $S$ is a resonant set. Suppose now $S$ is not a maximal resonant set. Then there exists a hexagon $h'$ of $G$ such that $h \notin S$. By Lemma \ref{lema1} 
$S'=S \cup \{h'\}$ is a resonant set and $b(S) \leq b(S')$.  But then we have perfect matching $M'$ in $G$ such that $b(S')=\ell (M')$ and  $\ell (M) \leq \ell (M')$ contradicts the maximality of $M$.

%
 \end{proof}

\begin{theorem}
The resonance graph  of  a  kinky benzenoid system  is a daisy cube.
\end{theorem}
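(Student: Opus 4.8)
The plan is to exhibit $R(G)$ directly as the daisy cube $Q_n(X)$, where $X$ is the set of maximal elements of the poset $({\cal L}(G),\leq)$. Since $\ell$ isometrically embeds $R(G)$ into $Q_n$ with image ${\cal L}(G)$, and the Hasse diagram of $({\cal L}(G),\leq)$ is the resonance digraph $\overrightarrow{R}(G)$, it suffices to establish the set-theoretic identity
$${\cal L}(G)=\{u\in B^n ; u\leq x \ \text{for some}\ x\in X\},$$
because the right-hand side is by definition the vertex set of $Q_n(X)$ while $R(G)\cong\langle {\cal L}(G)\rangle$.

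First I would pin down the generators. By Lemma \ref{lema2} and Lemma \ref{lema3} the maximal elements of $({\cal L}(G),\leq)$ are exactly the binary representations $b(S)$ of the maximal resonant sets $S$ of $G$, which identifies $X$ concretely; in particular $X\subseteq {\cal L}(G)$. The inclusion ${\cal L}(G)\subseteq\{u; u\leq x\ \text{for some}\ x\in X\}$ is then immediate, since $({\cal L}(G),\leq)$ is finite, so every label lies below some maximal label, i.e.\ below some element of $X$.

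The heart of the matter is the reverse inclusion, which reduces to showing that ${\cal L}(G)$ is downward closed (hereditary) in $(B^n,\leq)$: if $v\in {\cal L}(G)$ and $u\leq v$, then $u\in {\cal L}(G)$. By induction on the number of differing coordinates it is enough to treat $u=v-e_i$ for a single $i$ with $v_i=1$. Writing $v=\ell(M)$, I would argue, exactly as in the opening of the proof of Lemma \ref{lema3}, that $(\ell(M))_i=1$ forces $h_i$ to be $M$-alternating: in a kinky system the link from $h_i$ to its predecessor lying in $M$ completes to an alternating hexagon. Rotating $h_i$ yields a new perfect matching $M'=M\triangle E(h_i)$; because this alters only the edges inside $h_i$, it changes the link from $h_i$ to its predecessor while leaving the links from every successor of $h_i$ and from all other hexagons untouched. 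Hence $\ell(M')$ and $\ell(M)$ differ in coordinate $i$ alone, so $\ell(M')=v-e_i=u\in {\cal L}(G)$.

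I expect the main obstacle to be this last step, namely verifying that one hexagon rotation flips exactly one label coordinate and that the flipped coordinate is precisely $i$. This is where the purely angular (kinky) hypothesis is essential: in a linearly connected hexagon the two degree-two vertices are non-adjacent, the link need not complete to an $M$-alternating hexagon, and the clean correspondence between a single rotation and a single coordinate can break down. Once downward closure is secured, combining it with the finiteness inclusion yields the displayed identity, whence $R(G)\cong\langle {\cal L}(G)\rangle=Q_n(X)$ is a daisy cube. An alternative route to heredity, bypassing the explicit rotation bookkeeping, is to verify that $b$ is a poset isomorphism from the resonant sets of $G$ ordered by inclusion onto $({\cal L}(G),\leq)$; since any subset of a resonant set is again resonant, downward closure of ${\cal L}(G)$ would then follow formally, at the cost of showing that every resonant set, not only the maximal ones handled in Lemma \ref{lema2}, is realized as $\ell(M)$ for some $M$.
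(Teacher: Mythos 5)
Your proof is correct, and although it shares the paper's skeleton --- identify the maximal elements of $({\cal L}(G),\leq)$, note that every label lies below a maximal one, and then show that everything below a maximal label is again a label --- it handles the decisive last step by a genuinely different route. The paper closes that gap by citing \cite{ta-zi}: for a maximal resonant set $S$ the interval $I_{R(G)}(b(S),0^n)$ induces a $|S|$-cube in $R(G)$, and since this interval sits inside $I_{Q_n}(b(S),0^n)$, which has exactly $2^{|S|}$ elements, the cube must exhaust it, giving downward closure below each maximal element. You instead prove heredity of ${\cal L}(G)$ directly: if $(\ell(M))_i=1$ then $h_i$ is $M$-alternating, and rotating $h_i$ yields a perfect matching whose label agrees with $\ell(M)$ except that the $i$-th coordinate becomes $0$. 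This works, and it makes visible exactly where the kinky hypothesis enters; but two points deserve to be written out in full. First, the claim that a link from $h_i$ to its predecessor forces $h_i$ to be $M$-alternating is not automatic: the two vertices of $h_i$ not covered by the link could a priori be matched into a successor hexagon $h_k$; this is excluded by the both-or-neither property of links from \cite{kl-zi-br} (the companion edge of the link from $h_k$ to $h_i$ would have to lie in $M$, but its endpoint is already covered), and by the absence of a hexagon attached along the edge opposite the predecessor, which is precisely the no-linear-hexagon hypothesis. This is the same assertion the paper itself uses without proof at the start of Lemma \ref{lema3}, so you are no less rigorous than the source, but your argument leans on it more heavily. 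Second, the case $i=1$, where the coordinate is defined by the opposite edge $e$ rather than by a link, needs a separate (easy) check. What the paper's route buys is brevity via an external reference; what yours buys is a self-contained argument exhibiting ${\cal L}(G)$ as a hereditary set --- exactly the characterization of daisy-cube vertex sets --- and it incidentally shows that Lemmas \ref{lema2} and \ref{lema3} are needed only to identify the generating antichain with the maximal resonant sets, not to establish that $R(G)$ is a daisy cube.
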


\begin{proof}

It was shown in  \cite{kl-mo} that the daisy cube $Q_n(X)$ induced on  set $X$ is isomorphic to 
$\langle \cup_{x \in \widehat{X}} I_{Q_n} (x,0^n) \rangle$, where $\hat{X}$ is an antichain consisiting of all maximal elements of $X$.

Let $G$ be a kinky benzenoid system with $n$ hexagons and let $X={\cal L}(G)$ and $\widehat{X}$ the set of all maximal elements of $({\cal L}(G),\leq)$.
By Lemmas \ref{lema2} and \ref{lema3} for each $x=\ell (M) \in \widehat{X}$ there is a maximal resonant set  $S$ of $G$ such that $b(S)=\ell (M)$.
 Further, for  $x =\ell (M)=b(S) \in \widehat{X}$ the graph induced on  interval
$I_{R(G)}(\ell (M),0^n)$ is the subgraph of the  resonance graph $R(G)$ isomorphic to the $k$-cube, where $k$ equals the number of $1'$s in $\ell (M)$ i.e. $k=|S|$ (similar result was shown in \cite{ta-zi}).
 Since for every vertex $\ell (M')$ of the resonance  graph $R(G)$ there exists
  $x' \in \widehat{X}$ such that $\ell (M') \leq x'$, we are done.

\end{proof}

On Figure \ref{slika2} we can see a kinky benzenoid system  with the resonance graph that is a daisy cube, and on Figure \ref{slika1} we have a  benzenoid system with linear hexagon where the  resonance graph is not a daisy cube.

\begin{figure}[!h]
	\centering
	\includegraphics[scale=0.6]{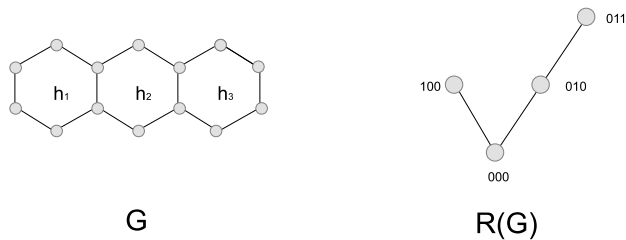}
\caption{A  benzenoid system $G$ with linear hexagon and the resonance graph $R(G)$ which is not a daisy cube.}
	\label{slika1}
\end{figure}

\section*{Acknowledgment}

The author acknowledge the financial support from the Slovenian Research Agency (research core funding No. P1-0297). 

\noindent

\baselineskip=17pt

\end{document}